\begin{document}
\newtheorem{thm}{Theorem}[section]
\newtheorem*{thm*}{Theorem}
\newtheorem{lem}[thm]{Lemma}
\newtheorem{prop}[thm]{Proposition}
\newtheorem{cor}[thm]{Corollary}
\newtheorem*{conj}{Conjecture}
\newtheorem{proj}[thm]{Project}
\newtheorem{question}[thm]{Question}
\newtheorem{rem}{Remark}[section]

\theoremstyle{definition}
\newtheorem*{defn}{Definition}
\newtheorem*{remark}{Remark}
\newtheorem{exercise}{Exercise}
\newtheorem*{exercise*}{Exercise}

\numberwithin{equation}{section}

\newcommand{\rad}{\operatorname{rad}}

\newcommand{\Z}{{\mathbb Z}} 
\newcommand{\Q}{{\mathbb Q}}
\newcommand{\R}{{\mathbb R}}
\newcommand{\C}{{\mathbb C}}
\newcommand{\N}{{\mathbb N}}
\newcommand{\FF}{{\mathbb F}}
\newcommand{\fq}{\mathbb{F}_q}
\newcommand{\rmk}[1]{\footnote{{\bf Comment:} #1}}

\renewcommand{\mod}{\;\operatorname{mod}}
\newcommand{\ord}{\operatorname{ord}}
\newcommand{\TT}{\mathbb{T}}
\renewcommand{\i}{{\mathrm{i}}}
\renewcommand{\d}{{\mathrm{d}}}
\renewcommand{\^}{\widehat}
\newcommand{\HH}{\mathbb H}
\newcommand{\Vol}{\operatorname{vol}}
\newcommand{\area}{\operatorname{area}}
\newcommand{\tr}{\operatorname{tr}}
\newcommand{\norm}{\mathcal N} 
\newcommand{\intinf}{\int_{-\infty}^\infty}
\newcommand{\ave}[1]{\left\langle#1\right\rangle} 
\newcommand{\Var}{\operatorname{Var}}
\newcommand{\Prob}{\operatorname{Prob}}
\newcommand{\sym}{\operatorname{Sym}}
\newcommand{\disc}{\operatorname{disc}}
\newcommand{\CA}{{\mathcal C}_A}
\newcommand{\cond}{\operatorname{cond}} 
\newcommand{\lcm}{\operatorname{lcm}}
\newcommand{\Kl}{\operatorname{Kl}} 
\newcommand{\leg}[2]{\left( \frac{#1}{#2} \right)}  
\newcommand{\Li}{\operatorname{Li}}

\newcommand{\sumstar}{\sideset \and^{*} \to \sum}

\newcommand{\LL}{\mathcal L} 
\newcommand{\sumf}{\sum^\flat}
\newcommand{\Hgev}{\mathcal H_{2g+2,q}}
\newcommand{\USp}{\operatorname{USp}}
\newcommand{\conv}{*}
\newcommand{\dist} {\operatorname{dist}}
\newcommand{\CF}{c_0} 
\newcommand{\kerp}{\mathcal K}

\newcommand{\Cov}{\operatorname{cov}}
\newcommand{\Sym}{\operatorname{Sym}}

\newcommand{\Ht}{\operatorname{Ht}}

\newcommand{\E}{\operatorname{\mathbb E}} 
\newcommand{\sign}{\operatorname{sign}} 
\newcommand{\meas}{\operatorname{meas}} 
\newcommand{\length}{\operatorname{length}} 

\newcommand{\divid}{d} 

\newcommand{\GL}{\operatorname{GL}}
\newcommand{\SL}{\operatorname{SL}}
\newcommand{\re}{\operatorname{Re}}
\newcommand{\im}{\operatorname{Im}}
\newcommand{\res}{\operatorname{Res}}
 \newcommand{\eigen}{\Lambda} 
\newcommand{\tens}{\mathbf t} 
\newcommand{\diam}{\operatorname{diam}}
\newcommand{\fixme}[1]{\footnote{Fixme: #1}}
 \newcommand{\EWp}{\mathbb E^{\rm WP}} 
\newcommand{\orb}{\operatorname{Orb}}
\newcommand{\supp}{\operatorname{Supp}}
\newcommand{\mmfactor }{\textcolor{red}{c_{\rm Mir}}}
\newcommand{\Mg}{\mathcal M_g} 
\newcommand{\MCG}{\operatorname{Mod}} 
\newcommand{\Diff}{\operatorname{Diff}} 
\newcommand{\If}{I_f(L,\tau)}
\newcommand{\SigGOE}{\Sigma^2_{\rm GOE}}

\newcommand{\Nc}{\mathcal{N}}  
\newcommand{\Rpos}{\R_{>0}}
\newcommand{\Rnneg}{\R_{\geq 0}}
\newcommand{\vlim}{ \overset{v}{\rightarrow}}
\newcommand{\dlim}{ \overset{d}{\rightarrow}}
\newcommand{\Pois}{\operatorname{Pois}}

\title[The CLT on random surfaces of large genus]
{On the Central Limit Theorem for linear eigenvalue statistics on random surfaces of large genus}
\author{Ze\'ev Rudnick  and Igor Wigman}
\address{School of Mathematical Sciences, Tel Aviv University, Tel Aviv 69978, Israel} 
\email{rudnick@tauex.tau.ac.il}
\address{Department of Mathematics, King's College London, UK}
\email{igor.wigman@kcl.ac.uk}

\thanks{ This research was supported by the European Research Council (ERC) under the European Union's Horizon 2020 research and innovation programme (grant agreement No. 786758) and by the Israel Science Foundation (grant No. 1881/20).  }

\begin{abstract}  
We study the fluctuations of smooth linear statistics of Laplace eigenvalues of compact hyperbolic surfaces lying in short energy windows, when averaged over the moduli space of surfaces of a given genus.
The average is taken with respect to  the Weil-Petersson measure. We show that  first taking the large genus limit, then a short window limit, the distribution tends to a Gaussian. The variance was recently shown to be given by the corresponding quantity for the Gaussian Orthogonal Ensemble (GOE), and  the Gaussian fluctuations are also consistent with those in Random Matrix Theory,   as conjectured in the physics literature for a fixed surface.
\end{abstract}

\date{\today}
\maketitle


\section{Introduction}  
Let $X$ be a compact hyperbolic surface of genus $g\geq 2$, 
and $\lambda_j = 1/4+r_j^2$ be the eigenvalues of the Laplacian on $X$.  
We examine the statistics of these eigenvalues in short(ish) intervals. For this purpose, we use a smooth linear statistic as in \cite{RGOE}: for an even test function $f$ with compactly supported Fourier transform $\^f \in C_c^\infty(\R)$ 
and $\tau>0$  define 
\[
N_{f,L,\tau}(X):= \sum_{j\geq 0} f\left(L\left(r_j-\tau \right)\right) +  f\left(L\left(r_j + \tau \right)\right)  . 
\]
(When $\tau=0$ the definition changes to $\sum_j f(Lr_j)$.)

In \cite{RGOE}  it was shown that in the double limit, of first averaging over the moduli space $\Mg$ of hyperbolic surfaces of genus $g$ with respect to the Weil-Petersson measure, taking the large genus limit $g\to \infty$, 
and then taking the limit $L\to \infty$, the variance of $N_{f,L,\tau}$ is given by the GOE variance: 
denoting by $\EWp_g$ the average over $\Mg$, we have 
\begin{equation}\label{eq:RGOE}
\lim_{L\to \infty} \left( \lim_{g\to \infty}  
\EWp_g\left( \left|  N_{L,\tau}-\EWp_g( N_{L,\tau})\right|^2 \right) \right) 
= \SigGOE(f) 
\end{equation}
where $\SigGOE(f) = 2\intinf |x|\^f(x)^2dx$. This supports conjectures of Michael Berry \cite{Berry1985, Berry1986} on number variance for chaotic systems. 

In this note, we examine  the distribution of  $N_{L,\tau}(X)$. 
The distribution is conjectured to be Gaussian for any fixed hyperbolic surface in the large energy limit \cite{AS}, see also \cite{RudnickCLT} for the modular surface, as one can expect based on analogous behaviour  in random matrix theory \cite{DE}, and for the zeros of the Riemann zeta function \cite{Selberg}. Here, we take the surface $X$ to be random with respect to the Weil Petersson measure on the moduli space $\Mg$, and show that in the double limit the distribution tends to a normal distribution: 

\begin{thm}\label{thm:char function Nintro}
For all bounded continuous functions $h$, we have
\[
\lim_{L\to \infty}\lim_{g\to \infty} \EWp_g\left( h\left(\frac{N_{L,\tau} -\EWp_g(N_{L,\tau} )}{\sqrt{\SigGOE(f)}}\right) \right)  = \frac 1{\sqrt{2\pi}} \intinf h(t)e^{-t^2/2}dt .
\]
\end{thm}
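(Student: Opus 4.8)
The plan is to prove convergence to a Gaussian via the method of moments. Since the normalizing variance $\SigGOE(f)$ has already been established in \eqref{eq:RGOE} as the double limit of the variance, it suffices to show that all higher centered moments converge, in the same double limit (first $g\to\infty$, then $L\to\infty$), to the moments of a standard Gaussian after normalization; that is, the odd centered moments vanish and the $2k$-th centered moment tends to $(2k-1)!!\cdot \SigGOE(f)^k$. By the classical moment convergence theorem (the Gaussian being determined by its moments), this yields convergence in distribution, hence the stated convergence of $\EWp_g(h(\cdots))$ for all bounded continuous $h$.

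The key computational input is a workable formula for the moments $\EWp_g\left((N_{L,\tau}-\EWp_g(N_{L,\tau}))^m\right)$. Writing $N_{L,\tau}$ as a sum over eigenvalues via the test function $f$, I expect to use a trace-formula / geometric expansion: expanding $f(L(r_j\mp\tau))$ through $\^f$ converts the linear statistic into a sum over closed geodesics (via the Selberg trace formula), and the moments then become averages over $\Mg$ of products of such geometric sums. The central tool should be Mirzakhani's integration formula together with its large-genus asymptotics (as developed by Mirzakhani--Zograf and Mirzakhani--Petri), which govern the Weil--Petersson expectation of geometric functions summed over multicurves. In the large-genus limit these asymptotics decouple: the primitive closed geodesics behave, in a suitable sense, like a Poisson point process (this is the Mirzakhani--Petri theorem), so that distinct geodesic length contributions become asymptotically independent. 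The cumulants of $N_{L,\tau}$ should therefore be expressible, in the $g\to\infty$ limit, in terms of sums over closed geodesics weighted by $\^f$, and the independence structure forces the higher cumulants to be controlled by lower-order geometric data.

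The heart of the argument, and the step I expect to be the main obstacle, is showing that after the large-genus limit all cumulants of order $\geq 3$ vanish once the short-window limit $L\to\infty$ is taken, while the second cumulant survives and equals $\SigGOE(f)$. Concretely, one must show that the $m$-th cumulant, as a function of $L$, is negligible relative to $\SigGOE(f)^{m/2}$ for $m\geq 3$. The mechanism should be that, in the trace-formula expansion, the dominant contribution to the variance comes from the diagonal pairing of a geodesic with itself (the term producing $|x|$ in $\SigGOE(f)=2\intinf |x|\^f(x)^2\,dx$), whereas higher cumulants involve higher-order correlations that, under the Poissonian large-genus limit and the oscillatory averaging from $\^f$, are of strictly lower order in $L$. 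Quantifying this requires careful bookkeeping: controlling the off-diagonal geometric terms, verifying that the Mirzakhani--Petri asymptotics apply uniformly enough to let one interchange the limit and the (finite) moment sum, and tracking the $L$-dependence of each cumulant. A subtle point is the order of limits: the $g\to\infty$ limit must be taken first, which is exactly the regime where the Poisson approximation is valid, and only afterward does one exploit $L\to\infty$ to kill the higher cumulants. Establishing the requisite uniform bounds to justify these interchanges, and identifying which geometric configurations dominate at each order, is where the real work lies.
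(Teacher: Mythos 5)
Your overall skeleton --- trace-formula expansion, large-genus Poissonian behaviour of the primitive length spectrum, then killing cumulants of order $\geq 3$ in the $L\to\infty$ limit --- matches the paper's strategy, but your choice to run the argument through the method of moments in $g$ creates a genuine gap that the paper deliberately engineers around. The Mirzakhani--Petri theorem is a statement about convergence \emph{in distribution} of the length-spectrum point process, and convergence in distribution does not imply convergence of moments: to conclude that $\EWp_g\bigl((N_{L,\tau}-\EWp_g N_{L,\tau})^m\bigr)$ converges as $g\to\infty$ to the corresponding moment of the Poisson model, you would need uniform integrability of the $m$-th powers over $\Mg$, uniformly in $g$. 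This is not innocuous here: the geometric kernel $H_{L,\tau}$ blows up like $\log(1/x)$ as $x\to 0$, so surfaces with short systoles contribute, and the $m$-th moment of the geodesic sum \eqref{split N} involves $m$-fold correlations over \emph{all} primitive closed geodesics, including non-simple ones, to which Mirzakhani's integration formula does not directly apply. Your proposal names ``interchange of limits'' as a difficulty but supplies no mechanism for it; as written, the step from the Poisson heuristic to actual moment convergence in $g$ is missing, and it is the hardest part of your plan.

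The paper's route avoids this entirely, and it is worth seeing why. Since the theorem only requires testing against \emph{bounded} continuous $h$, it suffices to prove convergence in distribution, never of moments in $g$: the paper shows (Lemma~\ref{lem:extend vague conv}) that $\mu\mapsto\sum_{\ell\in\mu}H_{L,\tau}(\ell)$ is continuous for the vague topology even though $H_{L,\tau}$ is singular at $0$ (compact support in $(0,L]$ is what saves it), so the continuous mapping theorem transports Mirzakhani--Petri directly to $N^{osc}\dlim S_{L,\tau}$ at fixed $L$, with convergence of characteristic functions by L\'evy continuity. Only \emph{then}, on the limiting Poisson process, are cumulants computed --- and there Campbell's formula \eqref{moment gen f} gives the $m$-th cumulant in closed form as $\int H_{L,\tau}^m\,d\nu_{MP}$, so Proposition~\ref{Prop: cumulants of SLtau} reduces to an elementary estimate via Lemma~\ref{lem:bounds for HL}, yielding $O_m(L^{-m+o(1)})$ uniformly in $\tau$. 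Note also that your guessed mechanism for the vanishing of higher cumulants (oscillatory cancellation and diagonal-versus-off-diagonal bookkeeping) is not what happens in the paper: for a Poisson process the cumulants are single integrals of powers of $H_{L,\tau}$, and they vanish simply because $H_{L,\tau}$ is pointwise small (of size roughly $1/L$ times logarithmic or exponentially decaying factors), with no cancellation needed. If you wish to salvage a moments-in-$g$ proof, you would have to add uniform-in-$g$ moment bounds for the length spectrum (including short and non-simple geodesics), which is substantial extra work the paper's point-process argument renders unnecessary.
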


 The opening step in  the proof of Theorem~\ref{thm:char function Nintro} is to express $N_{L,\tau}$ as a sum over closed geodesics via the Selberg Trace Formula. We then apply a recent result of Mirzakhani and Petri \cite{MP}, that the set of lengths of primitive closed geodesics, when thought of as a random point process on the moduli space  $\Mg$, converge as $g\to \infty$, to a Poisson point process with a certain intensity. We thus obtain, for each $L$, a certain random variable, the value of a functional $\mathcal H_{L,\tau}$ on the Poisson point process, which we then show has a Gaussian limit distribution as $L\to \infty$.  
 
 A noteworthy feature of our proof of Theorem~\ref{thm:char function Nintro} is that we avoid using the method of moments, and instead use various  features of the theory of point processes to pass directly to the limiting Poisson point process, where the computation 
 of the limit $L\to \infty$ is greatly streamlined.

There are other random models of hyperbolic surfaces, and various spectral statistics in these  models have been explored recently \cite{HM, Shen-Wu},   
for instance in the random cover model, the analogue of \cite{RGOE} for the smooth number variance  has recently been obtained 
 by Fr\'ed\'eric Naud  \cite{Naud}, who also obtained GUE statistics for the twisted Laplacian.   
The CLT in the random cover model will be in the MSc thesis of   Yotam Maoz \cite{Maoz}.

\section{Background on point processes}

\subsection{Generalities}
We give  brief background on point processes, using the survey \cite{Grandell} as our basic reference. 

Our background space will be  the non-negative reals  $\Rnneg= [0,\infty)$.  
A point process on $\Rnneg$ is a random assignment of a set of points in $\Rnneg$, each of which is assumed to be locally finite. 
We assume that the origin is not one of these points. We denote by $\Nc$ the set of all realizations of  point processes on $\Rnneg$. These can be thought of as atomic measures   
\begin{equation}\label{atomic rep}
\mu = \sum_{j=1}^\infty  \delta(x_j)
\end{equation}
with $0<x_1\leq x_2\leq \dots$ a discrete set of points (so the only possible accumulation point is at infinity) each occurring with finite multiplicity. In particular, for such $\mu$, we have  $\mu(0)=0$ and $\mu\{(0,t]\}<\infty$.  

We have a topology on $\Nc$, given by declaring that a sequence of measures $\mu_n\in \Nc$ converges vaguely to another measure $\mu_\infty\in \Nc$, 
if  for each continuity point of $ \mu_\infty\{(0,t]\}$, we have $\lim_{n\to \infty}\mu_n\{(0,t]\}=\mu\{(0,t]\}$.  We denote this by $\mu_n\vlim \mu$. 
 Equivalently \cite[Theorem 1]{Grandell}, for all {\em compactly supported} continuous functions $G$ on $\Rnneg$  we have
\begin{equation}\label{functional def vague conv}
\lim_{n\to \infty} \int_0^\infty G d\mu_n = \int_0^\infty G d\mu_\infty .
\end{equation}
With this topology, $\Nc$ becomes a separable, complete space which is metrizable \cite{Grandell}.

For a point process $\mathbf N$ on $\Rnneg$ and a bounded Borel set $B\subset \Rnneg$, we denote by $\mathbf N(B)$ the number of points in $B$; this is a random variable. The measure $\lambda(B) = \E(\mathbf N(B))$ is called the intensity  of the process. 

As an important example, a Poisson point process with intensity   $\lambda$ is a point process $\Pois(\lambda)$ so that for any Borel set $B$, the random variables $\mathbf N(B)$ is a Poisson variable with intensity  $\lambda(B)$, and so that for any choice of {\em disjoint} Borel sets $B_1,\dots, B_k$, the random variables $\mathbf N(B_1),\dots, \mathbf N(B_k)$ are {\em independent}.

Given point processes $\mathbf N_n, \mathbf N_\infty$ on $\Rnneg$, we say that $\mathbf N_n$ converges in distribution to $\mathbf N_\infty$ (written $\mathbf N_n \dlim \mathbf N_\infty$) if the sequence of random vectors $(\mathbf N_n(B_1),\dots, \mathbf N_n(B_k))$ converges in distribution to the random vector $(\mathbf N_\infty(B_1),\dots, \mathbf N_\infty(B_k))$ for all $k\geq 1$ and all choices of bounded Borel sets $B_i$ with boundaries satisfying $\mathbf N_\infty(\partial B_i)=0$ almost surely for all $i$. 
This is equivalent to requiring  that 
\[
\E(h(\mathbf N_n))\to \E(h(\mathbf N_\infty))
\]
  for all  bounded continuous functions $h:\mathcal N\to \R$ (continuous means that whenever we have a sequence $\nu_n\in \Nc$ which converges vaguely to $\nu_\infty$, we have $\lim_n h(\nu_n) = h(\nu_\infty)$).

We next recall the continuous mapping theorem (see e.g. \cite[Theorem 9.4.2]{AL}), which in our context,  states that if we have a sequence of random variables $X_n: \Omega_n\to \Nc$, each defined on its own probability space 
$\Omega_n$,  which converge in distribution to another random variable $X_\infty:\Omega_\infty\to \Nc$, and 
$\mathcal G:\Nc\to \R$ is a continuous map\footnote{We can also allow $\mathcal G$ to not be  continuous in a set of measure zero w.r.t. the distribution of $X_\infty$.}   then the random variables $\mathcal G(X_n):\Omega_n\to \R$ converge in distribution to the random variable $\mathcal G(X_\infty):\Omega_\infty\to \R$.  

\subsection{A functional}
 
It important for us to extend the functional form \eqref{functional def vague conv} of vague convergence to allow taking $G$ which is continuous on the positive reals, but allowed not to extend continuously to all of  $\Rnneg$, e.g. to blow up at $x=0$. 
We define a function   $\mathcal G:\Nc\to \R$ by
\[
\mathcal G:\mu\in \Nc  \mapsto  \int_0^\infty G d\mu := \sum_{x\in \mu} G(x).
\]

We claim that $\mathcal G:\Nc\to \R$ is  {\em continuous}: 
\begin{lem}\label{lem:extend vague conv}
Assume $G\in C(\Rpos)$ is continuous on the positive reals, and  supported in a bounded interval. 
Then $\mathcal G:\Nc\to \R$ is a   continuous mapping.  
\end{lem}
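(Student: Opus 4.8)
The plan is to reduce the statement to the functional characterization of vague convergence in \eqref{functional def vague conv}, which applies to test functions that are continuous on all of $\Rnneg$, the origin included. The only feature of $G$ that prevents a direct application is its possible blow-up as $x\to 0^+$. The guiding observation is that this blow-up is never felt by the point configurations: for any vaguely convergent sequence $\mu_n\vlim \mu_\infty$, the points of the $\mu_n$ are eventually confined to a region $[\delta,\infty)$ bounded away from the origin, on which $G$ is bounded and continuous.

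To carry this out, I would take an arbitrary sequence $\mu_n\vlim \mu_\infty$ in $\Nc$ and show $\mathcal G(\mu_n)\to \mathcal G(\mu_\infty)$. Since $\mu_\infty$ charges no point at $0$ and its atoms form a discrete set, it has a smallest point $x_1=x_1(\mu_\infty)>0$ (with the convention $x_1=+\infty$ when $\mu_\infty$ is empty). The nondecreasing function $t\mapsto \mu_\infty\{(0,t]\}$ has at most countably many discontinuities, so I may fix a continuity point $\delta$ with $0<\delta<x_1$; then $\mu_\infty\{(0,\delta]\}=0$. Vague convergence gives $\mu_n\{(0,\delta]\}\to \mu_\infty\{(0,\delta]\}=0$, and since these are non-negative integers there is an $n_0$ with $\mu_n\{(0,\delta]\}=0$ for every $n\geq n_0$. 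Thus neither $\mu_\infty$ nor any $\mu_n$ with $n\geq n_0$ places a point in $(0,\delta]$. Because $G$ is supported in a bounded interval and continuous on $[\delta,\infty)\subset\Rpos$, I can pick a compactly supported continuous function $\tilde G$ on all of $\Rnneg$ that agrees with $G$ on $[\delta,\infty)$ (for instance, extend $G|_{[\delta,\infty)}$ across $[0,\delta]$ by a straight segment). Since the configurations in question have no point in $(0,\delta]$ while $\tilde G=G$ on $[\delta,\infty)$, we have
\[
\mathcal G(\mu_n)=\int_0^\infty G\,d\mu_n=\int_0^\infty \tilde G\,d\mu_n\quad(n\geq n_0),\qquad \mathcal G(\mu_\infty)=\int_0^\infty \tilde G\,d\mu_\infty .
\]
Now $\tilde G$ is a legitimate test function for \eqref{functional def vague conv}, which gives $\int_0^\infty \tilde G\,d\mu_n\to\int_0^\infty \tilde G\,d\mu_\infty$, and the displayed identities then yield $\mathcal G(\mu_n)\to \mathcal G(\mu_\infty)$.

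The only genuinely delicate point, and the step I expect to need the most care, is the passage from $\mu_n\{(0,\delta]\}\to 0$ to the \emph{uniform} vanishing $\mu_n\{(0,\delta]\}=0$ for all large $n$; this relies on the integrality of the counting variables together with the freedom to choose $\delta$ to be a continuity point strictly below the lowest atom of $\mu_\infty$. Once that confinement is in place the blow-up of $G$ near $0$ is irrelevant, and the remainder is the routine replacement of $G$ by the admissible test function $\tilde G$.
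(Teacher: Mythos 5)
Your proof is correct, but it takes a genuinely different route from the paper's. The paper invokes a structural fact about vague convergence of point measures (citing \cite[Theorem 25]{Grandell}): if $G$ is supported in $(0,L)$ and $x_{\infty,1},\dots,x_{\infty,J}$ are the atoms of $\mu_\infty$ in $(0,L)$, then for $n\gg 1$ each $\mu_n$ has exactly $J$ atoms in $(0,L)$, these converge individually to the $x_{\infty,j}$, and continuity of $G$ at each atom finishes the argument point by point. You never track individual atoms: you use only the defining property of vague convergence (convergence of counting functions at continuity points of $\mu_\infty\{(0,t]\}$) together with the integrality of $\mu_n\{(0,\delta]\}$ to confine all configurations to $(\delta,\infty)$ for large $n$, then replace $G$ by a continuous compactly supported extension $\tilde G$ on all of $\Rnneg$ and quote the functional characterization \eqref{functional def vague conv} verbatim. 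Your confinement step is sound, and in fact simpler than you suggest: since $\mu_\infty\{(0,t]\}$ vanishes identically on $(0,x_1)$, \emph{every} $\delta\in(0,x_1)$ is automatically a continuity point, so the appeal to countability of discontinuities is not needed; and $\mu_n\{(0,\delta]\}\to 0$ through non-negative integers does force eventual vanishing. Your handling of the empty configuration is also right, since the argument needs only the identity $\int_0^\infty G\,d\mu_n=\int_0^\infty \tilde G\,d\mu_n$, not vanishing of either side. What your route buys is self-containedness: it uses only the two characterizations of vague convergence already stated in the paper's background section, localizing all the delicacy of the blow-up of $G$ at $0$ into a single cutoff. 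What the paper's route buys is finer information---actual convergence of the atoms $x_{n,j}\to x_{\infty,j}$---at the cost of citing an additional theorem from \cite{Grandell}. Both are complete proofs of Lemma~\ref{lem:extend vague conv}.
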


\begin{proof}
 What we need to show is that if $\mu_\infty,\mu_n\in \Nc$ with $\mu_n\vlim \mu_\infty$ vaguely, then $\lim_{n\to \infty} \mathcal G(\mu_n) = \mathcal G(\mu_\infty)$, that is \eqref{functional def vague conv}  holds for $G$. 

Denote the atoms of the measures $\mu_n$, $\mu_\infty$ in the representation \eqref{atomic rep} by $x_{n,j}$ ($n=1,2\dots, \infty$, $j=1,2,\dots$):
\[
\mu_n = \sum_{j\geq 1} \delta(x_{n,j}) .
\]
Note that vague convergence $\mu_n\to \mu_\infty $ implies convergence of the point sets, which are the discontinuity points of the distribution functions, cf  \cite[Theorem 25]{Grandell}: 
Suppose that $G$ is supported in $(0,L)$. If $x_{\infty,1}\dots, x_{\infty,J}$ are the atoms of $\mu_\infty$ in $(0,L)$, so that $\mathcal G(\mu_\infty) = \sum_{j=1}^J G(x_{\infty,j})$, 
then for all $n\gg 1$, there are exactly $J$ atoms $x_{n,1},\dots, x_{n,J}$ of $\mu_n$ in $(0,L)$ and  $\lim_{n\to \infty}x_{n,j}=x_{\infty,j}$ for all $j=1,\dots, J$. Hence for $n\gg 1$, 
\[
\lim_{n\to \infty} \mathcal G(\mu_n) =\lim_{n\to \infty}  \sum_{j=1}^J G(x_{n,j}) =  \sum_{j=1}^J \lim_{n\to \infty}G(x_{n,j})  .
 \]
Since $G(x)$ is continuous at $x_j$, we have 
\[
\lim_{n\to \infty}G(x_{n,j})  = G(\lim_n x_{n,j}) = G(x_{\infty,j}),
\]
 so that 
\[
\lim_{n\to \infty} \mathcal G(\mu_n) =\sum_{j=1}^J  G(x_{\infty,j})= \mathcal G(\mu_\infty)  
\]
as claimed. 
\end{proof}

\section{Reduction to a Poisson approximation}
 
 \subsection{An expansion}
 Using Selberg's trace formula, we expand  the centered variable $N_{L,\tau}-\EWp(N_{L,\tau})$ as a sum over closed geodesics  \cite{RGOE} 
\[
N_{L,\tau}-\EWp(N_{L,\tau}) = N^{osc} -\EWp(N^{osc})
\]
with
\begin{equation}\label{split N}
N^{osc}(X) =  \sum_{\gamma} H_{L,\tau}\left(\ell_\gamma\left(X\right)\right)
\end{equation}
the sum over all primitive,  non-oriented, closed geodesics $\{\gamma\}$ on $X$, with lengths $\ell_{\gamma}(X)$ where  
\[
H_{L,\tau}(x) =\frac{2x}{L}\sum_{k=1}^\infty F(kx), \quad F(x) =   \frac{\^ f\left( \frac { x}{L}\right) \cos( x \tau)}{ \sinh( x /2)} .
\]

For each $x>0$, the sum defining $H_{L,\tau}(x)$ is finite, ranging up to $k\leq L/x$, and so we get a continuous function on the positive reals. But   there need be no limit as $x\to 0$; if   $\^f(0)>0$  then for $x$ small, 
\[
H_{L,\tau}(x)  
\gg \sum_{k \ll x^{-1/2}}  \frac{x}{\sinh(kx)} \^f(0)
\gg \sum_{k \ll x^{-1/2}}  \frac{\^f(0)}{k} \gg \log \frac 1x 
\]  
which blows up as $x\to 0$. 

\subsection{The Mirzakhani-Petri Theorem}

 For each genus $g\geq 2$, the moduli space $\Mg$ equipped with the Weil-Petersson probability measure gives a probability space, and  the length spectrum gives a point process $\mathcal L_g:\Mg\to \Nc$, assigning to a surface $X\in \Mg$ its (primitive, unoriented)  length spectrum $\mathcal L_g(X)= \{0<\ell_1\leq \ell_2\leq \dots\}$.  For an interval $[a,b]\subset \Rnneg$, we have the random variable 
 \[
 N_g([a,b]):X\mapsto \# \mathcal L_g(X)\cap [a,b].
 \]  
 
 Further, let $\Pois(\nu_{MP})$ be the Poisson point process on $\Rnneg$  with intensity  
 \[
 \nu_{MP}(x)=\frac{2\sinh^2(x/2)}{x} dx,
 \]
  that is for each interval $I\subset \Rnneg$, the counting function $N_{MP}(I)$ is a Poisson random variable with intensity $\nu_{MP}(I) = \int_I \frac{2\sinh^2(x/2)}{x}dx$. 
 
 Mirzakhani and Petri \cite{MP} showed that for any set of disjoint intervals $I_i=[a_i,b_i]$, $i=1,\dots,k$,  the random variable $(N_g(I_1),\dots N_g(I_k))$ converge in distribution to the Poisson random variable $(N_{MP}(I_1),\dots N_{MP}(I_k))$, in other words that the point processes $\mathcal L_g$ converge in distribution to the Poisson point  process $\Pois(\nu_{MP})$.

\subsection{A   random  approximation}

Define a mapping $\Nc\to \R$ 
\[
\mathcal H_{L,\tau} = \sum_{\ell\in \mathcal L}  H_{L,\tau}(\ell)  .
 \]
  We note that $H_{L,\tau}$ is a continuous function on the positive reals. Therefore, by Lemma~\ref{lem:extend vague conv}, 
 the functional $\mathcal H_{L,\tau}:\Nc\to \R$ is {\em continuous} with respect to the vague topology on the space of point processes on the positive reals.

By the Mirzakhani-Petri theorem combined with the continuous mapping theorem, we find that the random variables $N^{osc}=\mathcal H_{L,\tau}\circ \mathcal L_g: \Mg\to \R$ converge in distribution to the random variable 
\[
S_{L,\tau}:=\mathcal H_{L,\tau}\circ \Pois(\nu_{MP}).
\] 

We recall that a sequence of real valued random variables $Y_i$ converges in distribution to a random variable $Y$ if the cumulative distribution functions  (CDF's) $F_i(t)=\Prob(Y_i\leq t)$ converge pointwise to $F_Y(t)$  in every continuity point of $F_Y$. This is equivalent to requiring that for every bounded continuous function $h$, we have 
\[
\lim_{i\to \infty} \E\left( h\left( Y_i \right) \right) = \E\left( h\left( Y\right) \right) .
\]
In turn this, by L\'evy's continuity theorem, is equivalent to pointwise convergence of the characteristic functions:
Recall  that for a random variable $Y$, the characteristic function is $\phi_Y(t) = \E(\exp(itY))$, $t$ real.  

Hence the characteristic functions  of $ N^{osc} $         converge as $g\to \infty$ to the characteristic function of $S_{L,\tau} $:
\begin{cor}
We have convergence of characteristic functions
\[
\lim_{g\to \infty} \EWp_g\left(\exp\left(it \left( N^{osc}  \right)\right)\right)  = 
\E_{\Pois}\left(\exp\left(it \left(S_{L,\tau} \right)\right)\right) .
\]
\end{cor}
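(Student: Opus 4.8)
The plan is to assemble the three ingredients that the preceding subsection has already put in place: the Mirzakhani--Petri convergence $\mathcal L_g \dlim \Pois(\nu_{MP})$ of point processes, the continuity of the functional $\mathcal H_{L,\tau}$, and the continuous mapping theorem, and then to read off the characteristic functions from the resulting convergence in distribution. Since $N^{osc}=\mathcal H_{L,\tau}\circ \mathcal L_g$ and $S_{L,\tau}=\mathcal H_{L,\tau}\circ \Pois(\nu_{MP})$, the corollary is a formal consequence of the single assertion $N^{osc}\dlim S_{L,\tau}$, which is what I would establish first.

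First I would verify that the continuous mapping theorem genuinely applies. Its hypothesis is that $\mathcal H_{L,\tau}:\Nc\to\R$ be continuous for the vague topology, the random variables being allowed to live on the distinct probability spaces $\Mg$ and the space carrying $\Pois(\nu_{MP})$. For this I invoke Lemma~\ref{lem:extend vague conv}, whose hypotheses are that $H_{L,\tau}\in C(\Rpos)$ and that $H_{L,\tau}$ be supported in a bounded interval. Continuity on $\Rpos$ is clear, since for each $x>0$ the defining sum over $k$ is finite. For the support, note that $\^f\in C_c^\infty(\R)$, say $\supp\^f\subset[-R,R]$; then $F(x)=\^f(x/L)\cos(x\tau)/\sinh(x/2)$ vanishes for $x>LR$, hence $H_{L,\tau}(x)=\tfrac{2x}{L}\sum_k F(kx)$ already vanishes once its $k=1$ term does, i.e. for $x>LR$. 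Thus $H_{L,\tau}$ is supported in $(0,LR]$ and Lemma~\ref{lem:extend vague conv} yields continuity of $\mathcal H_{L,\tau}$; the logarithmic blow-up of $H_{L,\tau}$ as $x\to 0$ is harmless precisely because the lemma only demands continuity on the open half-line.

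With continuity in hand, the continuous mapping theorem combined with Mirzakhani--Petri gives $N^{osc}\dlim S_{L,\tau}$ as real-valued random variables. To convert this into the claimed identity of characteristic functions I would use the characterization recalled in the text, that convergence in distribution is equivalent to convergence of $\E(h(\cdot))$ for every bounded continuous $h:\R\to\R$. Applying it to the two bounded continuous functions $h_1(y)=\cos(ty)$ and $h_2(y)=\sin(ty)$ and recombining $\exp(ity)=\cos(ty)+\i\sin(ty)$ gives
\[
\lim_{g\to\infty}\EWp_g\!\left(\exp(it\,N^{osc})\right)=\E_{\Pois}\!\left(\exp(it\,S_{L,\tau})\right)
\]
for every real $t$, which is the assertion; equivalently one may cite L\'evy's continuity theorem directly.

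I expect the only point requiring genuine care — and hence the main obstacle — to be the verification that $\mathcal H_{L,\tau}$ is continuous on all of $\Nc$, i.e. the applicability of Lemma~\ref{lem:extend vague conv}. This is where the structure of the test function enters: it is the compact support of $\^f$ that forces $H_{L,\tau}$ to have bounded support, while the blow-up at the origin is exactly what makes the plain functional form \eqref{functional def vague conv} of vague convergence insufficient, so that the strengthened lemma is needed. Once continuity is secured, the remaining steps are purely formal invocations of the continuous mapping and L\'evy theorems.
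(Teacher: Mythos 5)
Your proposal is correct and takes essentially the same route as the paper: Mirzakhani--Petri convergence of the length-spectrum point processes, continuity of $\mathcal H_{L,\tau}$ via Lemma~\ref{lem:extend vague conv}, the continuous mapping theorem to get $N^{osc}\dlim S_{L,\tau}$, and then testing against bounded continuous functions (equivalently, L\'evy's theorem) to pass to characteristic functions. Your explicit check that $H_{L,\tau}$ is supported in a bounded interval $(0,LR]$, forced by the compact support of $\^f$, is exactly the point the paper leaves implicit when it notes the sum over $k$ is finite.
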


\subsection{Working with the Poisson approximation}

 We will next show that  in the limit $L\to \infty$,  the characteristic function  of $S_{L,\tau}$ tends to that of a Gaussian. By L\'evy's continuity theorem (see e.g. \cite[Chapter 10]{AL}), this will prove Theorem~\ref{thm:char function Nintro}.

\begin{thm}\label{thm:char function N} 
For all   $\tau>0$,  
\[
\lim_{L\to \infty}\E_{\Pois}(it   \frac{ S_{L,\tau} -\E(S_{L,\tau} )}{\sqrt{\SigGOE(f)}}) =e^{-t^2/2}  .
\] 
\end{thm}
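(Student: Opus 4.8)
The plan is to write down the characteristic function of the compensated Poisson functional $S_{L,\tau}-\E_{\Pois}(S_{L,\tau})$ in closed form, and then expand it as $L\to\infty$. Since $\^f\in C_c^\infty(\R)$, for each fixed $L$ the function $H_{L,\tau}$ is supported in a bounded interval $(0,cL]$, is continuous on $\Rpos$, and has only a logarithmic singularity at the origin. As $\nu_{MP}(x)=\frac{2\sinh^2(x/2)}{x}\,dx$ vanishes linearly at $x=0$, every moment $\int_0^\infty |H_{L,\tau}|^j\,d\nu_{MP}$ is finite. The exponential formula (Campbell's theorem, or the Lévy--Khintchine formula) for the characteristic functional of a Poisson point process then gives, for real $s$,
\[
\E_{\Pois}\Big(\exp\big(is(S_{L,\tau}-\E_{\Pois}(S_{L,\tau}))\big)\Big)=\exp\Big(\int_0^\infty\big(e^{isH_{L,\tau}(x)}-1-isH_{L,\tau}(x)\big)\,\nu_{MP}(x)\,dx\Big).
\]
Setting $s=t/\sqrt{\SigGOE(f)}$, it suffices to prove that the exponent tends to $-t^2/2$.

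Using the elementary inequality $\big|e^{iu}-1-iu+\tfrac{u^2}{2}\big|\le \tfrac{|u|^3}{6}$, I would split the exponent as
\[
\int_0^\infty\big(e^{isH_{L,\tau}}-1-isH_{L,\tau}\big)\,\nu_{MP}\,dx=-\frac{s^2}{2}V_L+R_L,\qquad V_L:=\int_0^\infty H_{L,\tau}(x)^2\,\nu_{MP}(x)\,dx,
\]
where $V_L=\Var_{\Pois}(S_{L,\tau})$ and $|R_L|\le \tfrac{|s|^3}{6}\int_0^\infty |H_{L,\tau}(x)|^3\,\nu_{MP}(x)\,dx$. The proof thus reduces to two estimates: (a) $V_L\to\SigGOE(f)$, and (b) $\int_0^\infty|H_{L,\tau}|^3\,d\nu_{MP}\to0$, which together give $-\tfrac{s^2}{2}V_L\to-\tfrac{t^2}{2}$ and $R_L\to0$.

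For (a), the decisive feature is that in the product $H_{L,\tau}^2\,\nu_{MP}$ the factor $\sinh^2(x/2)$ from $\nu_{MP}$ cancels the $\sinh^2(x/2)$ in the denominator of the $k=1$ term $\big(\tfrac{2x}{L}F(x)\big)^2$, leaving $\frac{8x}{L^2}\^f(x/L)^2\cos^2(x\tau)$; the diagonal $k\ge2$ terms and all cross terms carry a surplus factor $\sinh^2(x/2)/(\sinh(kx/2)\sinh(k'x/2))$ which decays exponentially once $(k,k')\ne(1,1)$ and therefore contributes $o(1)$. Substituting $u=x/L$ turns the main term into $\int_0^\infty 8u\,\^f(u)^2\cos^2(Lu\tau)\,du$, and since $\tau>0$ the rapidly oscillating factor $\cos^2(Lu\tau)=\tfrac12(1+\cos(2Lu\tau))$ averages to $\tfrac12$ by Riemann--Lebesgue, yielding $\int_0^\infty 4u\,\^f(u)^2\,du=2\intinf|x|\^f(x)^2\,dx=\SigGOE(f)$. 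This is precisely the Poisson-side computation underlying \eqref{eq:RGOE}, so I would cite \cite{RGOE} for the routine details.

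For (b), the point is a mismatch of $\sinh$-powers: in $|H_{L,\tau}|^3\,\nu_{MP}$ the cube contributes $\sinh^3(x/2)$ in the denominator against only $\sinh^2(x/2)$ in $\nu_{MP}$, so a single factor $1/\sinh(x/2)$ survives and produces net exponential decay in $x$; with the prefactor $L^{-3}$ this gives $\int_0^\infty|H_{L,\tau}|^3\,d\nu_{MP}=O(L^{-3})$, where near the origin one instead uses $|H_{L,\tau}(x)|\ll L^{-1}\log(2/x)$ together with $\nu_{MP}(x)\ll x\,dx$. Hence the exponent tends to $-t^2/2$, the characteristic function converges to $e^{-t^2/2}$, and by L\'evy's continuity theorem (together with the preceding corollary) Theorem~\ref{thm:char function N} and then Theorem~\ref{thm:char function Nintro} follow. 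The genuine obstacle is estimate (a): both the oscillatory averaging $\cos^2(Lu\tau)\to\tfrac12$, where the hypothesis $\tau>0$ is essential and which distinguishes the normalization here from the $\tau=0$ case, and the exponential suppression of the higher harmonics must be controlled; by contrast (b) is comparatively soft precisely because of the extra power of $1/\sinh(x/2)$, with only minor care needed to justify the exponential formula across the logarithmic singularity at $x=0$.
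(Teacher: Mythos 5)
Your proposal is correct, and it lives in the same framework as the paper's proof --- both arguments work on the Poisson side via Campbell's exponential formula and reduce everything to integrals of powers of $H_{L,\tau}$ against $\nu_{MP}$, with the variance statement (a) quoted from \cite[Lemma 5.2]{RGOE} in both cases --- but your finishing device is genuinely different. The paper expands the cumulant generating function, identifies $\kappa_m(S_{L,\tau})=\int_0^\infty H_{L,\tau}^m\,d\nu_{MP}$, and proves $\kappa_m\to 0$ for \emph{every} $m\geq 3$ using the bounds of \cite[Lemma 6.3]{RGOE} ($|H_{L,\tau}(x)|\ll L^{-1}\log(L/x)$ for $0<x<1/2$ and $|H_{L,\tau}(x)|\ll L^{-1}xe^{-x/2}$ for $x\geq 1/2$), obtaining $\int H_{L,\tau}^m\,d\nu_{MP}\ll_m L^{-m+o(1)}$. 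You instead write the characteristic exponent of the compensated variable exactly and apply the Lindeberg-type bound $|e^{iu}-1-iu+u^2/2|\leq |u|^3/6$, so that only the single estimate $\int|H_{L,\tau}|^3\,d\nu_{MP}\to 0$ is needed. Your route is slightly more economical: it requires one moment bound rather than a family indexed by $m$, and it delivers pointwise convergence of the characteristic function directly, without having to sum the cumulant series (with its $m$-dependent constants) against $z^m/m!$. The paper's route yields more: vanishing of all higher cumulants, hence convergence of all moments, and the combinatorics is transparent since for a Poisson functional the $m$-th cumulant \emph{is} the $m$-th moment integral. Two small remarks: your asserted near-origin bound $|H_{L,\tau}(x)|\ll L^{-1}\log(2/x)$ is indeed valid (a triangle-inequality computation splitting the $k$-sum at $k\asymp 1/x$ gives it uniformly in $\tau$), and it makes your claimed $O(L^{-3})$ correct; if one only uses the quoted $L^{-1}\log(L/x)$ from \cite{RGOE}, the third-moment integral is $O((\log L)^3L^{-3})$, which of course still suffices. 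Finally, your explicit unravelling of the variance mechanism --- the cancellation of $\sinh^2(x/2)$ in the $(k,k')=(1,1)$ term, the Riemann--Lebesgue averaging $\cos^2(Lu\tau)\to\tfrac12$ where $\tau>0$ is used, and the exponential suppression of higher harmonics --- is a faithful summary of the Poisson-side computation behind \eqref{eq:RGOE}, and citing \cite{RGOE} for it is exactly what the paper does.
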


By its definition, the expected value is zero. That the variance is correct was demonstrated in the course of the proof of 
\cite[Lemma 5.2]{RGOE}. To show  Gaussianity, we use a combinatorial shortcut, of using cumulants.

Recall that the cumulants of a random variable $S$ are the coefficients in the Taylor expansion of the cumulant generating function $\log \E(e^{z S}) $ about $z=0$:
\[
\log \E(e^{z S})  = \sum_{m=1}^\infty \kappa_m(S) \frac{z^m}{m!}. 
\]
The first moment is the first cumulant, the second cumulant is just the variance, and the third cumulant is the centered third moment. In general, the $n$-th centered moment   is an $n$-th-degree polynomial in the first $n$ cumulants, for instance, the fourth centered moment is $\kappa_4+3\kappa_2^2$. 
Gaussianity   is equivalent to vanishing of the higher cumulants $\kappa_m(S)$ for $m\geq 3$. 
So we want to show:
\begin{prop}\label{Prop: cumulants of SLtau}
For all $m\geq 3$, 
\[
\lim_{L\to \infty} \kappa_m(S_{L,\tau}) = 0 . 
\]
\end{prop}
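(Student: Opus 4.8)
The plan is to exploit the fact that $S_{L,\tau}=\sum_{\ell}H_{L,\tau}(\ell)$ is a \emph{linear statistic of a Poisson point process}, for which the cumulants have an exact closed form. Writing $\phi=H_{L,\tau}$ and $\nu=\nu_{MP}$, Campbell's theorem for $\Pois(\nu)$ gives the cumulant generating function
\[
\log \E_{\Pois}\bigl(e^{z S_{L,\tau}}\bigr)=\int_0^\infty \bigl(e^{z\phi(x)}-1\bigr)\,d\nu(x).
\]
Expanding $e^{z\phi}-1$ in powers of $z$ and matching with the defining Taylor expansion of $\log\E(e^{zS})$ yields the clean identity
\[
\kappa_m(S_{L,\tau})=\int_0^\infty H_{L,\tau}(x)^m\,d\nu_{MP}(x)=\int_0^\infty H_{L,\tau}(x)^m\,\frac{2\sinh^2(x/2)}{x}\,dx .
\]
First I would record this formula, noting the two integrability checks that legitimise it: since $\hat f$ is supported in some $[-A,A]$, every term of $H_{L,\tau}$ vanishes once $x>AL$, so $\phi$ is compactly supported and bounded away from $x=0$; and near $x=0$ the only singularity is the logarithmic blow-up of $\phi$, which makes the generating-function integrand behave like $x^{-Cz/L}\cdot x$, integrable for $|z|$ small. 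This justifies extracting the cumulants term by term.

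The heart of the matter is a single \emph{uniform pointwise bound} on $H_{L,\tau}$. Using $|\cos|\le1$, $|\hat f|\le\|\hat f\|_\infty$, the cutoff $k\le AL/x$ coming from $\supp\hat f$, and the elementary estimates $\sinh t\ge t$ and $\sinh t\ge c\,e^{t}$ for $t\ge1$, I would show there are a constant $C$ and a fixed ($L$-independent) function $\psi$ with
\[
|H_{L,\tau}(x)|\le\frac{C}{L}\,\psi(x),\qquad
\psi(x)=\begin{cases}\log(2/x), & 0<x\le1,\\[2pt] \dfrac{x}{\sinh(x/2)}, & x\ge1,\end{cases}
\]
constants being absorbed into $C$. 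Concretely, for small $x$ the sum $\sum_k\sinh(kx/2)^{-1}$ is $\asymp x^{-1}\log(1/x)$ — the terms with $kx\lesssim1$ produce the harmonic sum $\tfrac2x\sum_{k\le2/x}\tfrac1k$ — and the prefactor $2x/L$ turns this into the stated $L^{-1}\log(1/x)$ (this is exactly the blow-up flagged after \eqref{split N}); for $x\ge1$ the sum over $k$ is geometric, dominated by $k=1$, and decays like $e^{-x/2}$.

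Finally I would substitute this bound into the cumulant formula. Since cumulants may change sign I use $|\kappa_m|\le\int_0^\infty|H_{L,\tau}|^m\,d\nu_{MP}$, whence
\[
|\kappa_m(S_{L,\tau})|\le\frac{C^m}{L^m}\int_0^\infty\psi(x)^m\,\frac{2\sinh^2(x/2)}{x}\,dx=\frac{C^m\,I_m}{L^m}.
\]
The decisive point is that $I_m<\infty$ precisely when $m\ge3$: near $x=0$ the integrand is $\asymp(\log1/x)^m\cdot x$, always integrable, while near $x=\infty$ it is $\asymp x^{m-1}e^{-(m-2)x/2}$, integrable exactly when $m-2\ge1$. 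Hence $\kappa_m(S_{L,\tau})=O(L^{-m})\to0$ for every $m\ge3$, which is the assertion (indeed with a rate to spare).

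The main obstacle is the pointwise bound and, with it, the borderline nature of $I_m$. I would stress that the same computation gives $I_2=\infty$: for $m=2$ the integrand grows like $x$ at infinity, a divergence cut off only at $x\sim AL$ by $\supp\hat f$, and it is precisely this cutoff that manufactures the order-one variance $\SigGOE(f)$ rather than a vanishing quantity. Thus the dichotomy ``$m=2$ survives, $m\ge3$ dies'' is governed by whether the exponential decay rate $m/2$ of $\psi^m$ strictly beats the exponential growth rate $1$ of the measure $\sinh^2(x/2)/x$, which it does exactly for $m\ge3$. The remaining care is to make the bound on $H_{L,\tau}$ genuinely uniform across the regimes $x\to0$, $x\asymp1$, $x\to\infty$, and to verify the convergence of the generating-function integral near $z=0$ used to extract the cumulants.
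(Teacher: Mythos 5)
Your proposal is correct and takes essentially the same route as the paper: extract $\kappa_m(S_{L,\tau})=\int_0^\infty H_{L,\tau}(x)^m\,d\nu_{MP}(x)$ from Campbell's formula, then show this integral vanishes as $L\to\infty$ for $m\ge 3$ via pointwise bounds on $H_{L,\tau}$ that are logarithmic near $x=0$ and exponentially decaying at infinity, each carrying a factor $1/L$. The only (harmless) difference is that you derive the bounds directly rather than citing \cite[Lemma 6.3]{RGOE}, and your small-$x$ bound $\frac{C}{L}\log(2/x)$ is marginally sharper than the $\frac 1L \log(L/x)$ of Lemma~\ref{lem:bounds for HL}, so you obtain the rate $O(L^{-m})$ in place of the paper's $L^{-m+o(1)}$.
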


We will use Campbell's formula \cite{Kingman}: Let $\mathcal L$  be a Poisson point process with intensity  $\nu$,   let 
$H: \R\to \R$ be a measurable function,  and define the random sum 
\[
S = \sum_{\ell\in \mathcal L} H(\ell) .
\]
Then (assuming everything converges) the expected value and variance of $S$ are given by 
\[
\E(S) = \int_\R H(x)d\nu(x),\qquad \Var(S) = \int_\R H(x)^2 d\nu(x)
\]
and the moment generating function is given by 
\begin{equation}\label{moment gen f}
\E(e^{z S}) =\exp\left( \int_\R  [ e^{z H(x)}-1] d\nu(x) \right) .
\end{equation}

Using the formula \eqref{moment gen f} gives
\begin{equation*}
\log \E(e^{z S})   =
 \sum_{m\geq 1} \frac{z^m}{m!} \int_\R H(x)^m d\nu(x) .
\end{equation*}
Therefore, Proposition~\ref{Prop: cumulants of SLtau} follows from:
\begin{prop}
For all $m\geq 3$, uniformly in $\tau$, 
\begin{equation}\label{criterion gaussian m}
\lim_{L\to \infty}\int_\R H_{L,\tau}(x)^m d\nu_{MP}(x) = 0 .
\end{equation}
 \end{prop}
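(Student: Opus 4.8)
The plan is to bound $\int_0^\infty H_{L,\tau}(x)^m\,d\nu_{MP}(x)$ by replacing $H_{L,\tau}$ with an explicit majorant and exploiting the cancellation between the factor $\sinh^2(x/2)$ in the Mirzakhani--Petri density and the $\sinh(kx/2)$ in the denominators defining $H_{L,\tau}$. Since $\^f$ is compactly supported, say in $[-A,A]$, the factor $\^f(kx/L)$ vanishes unless $kx\le AL$; in particular $H_{L,\tau}(x)=0$ for $x>AL$ and the defining $k$-sum is finite. Bounding $|\^f|\le\|\^f\|_\infty$ and $|\cos(kx\tau)|\le 1$ (which is where uniformity in $\tau$ comes from) gives
\[
|H_{L,\tau}(x)|\le \frac{2\|\^f\|_\infty}{L}\, x\,\Phi(x),\qquad \Phi(x):=\sum_{k\ge 1}\frac{1}{\sinh(kx/2)} .
\]
First I would record two estimates for $\Phi$: using $\sinh y\ge y$ for the terms with $kx\lesssim 1$ and the exponential lower bound on $\sinh$ for the tail yields $\Phi(x)\ll x^{-1}\log(2/x)$ on $(0,1]$, while comparing the series to its first term gives $\Phi(x)\ll 1/\sinh(x/2)$ on $[1,\infty)$. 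Hence, with a constant $C=C(f)$ independent of $\tau$,
\[
|H_{L,\tau}(x)|\le \frac{C}{L}\log\tfrac{2}{x}\ \ (0<x\le 1),\qquad |H_{L,\tau}(x)|\le \frac{C}{L}\,\frac{x}{\sinh(x/2)}\ \ (x\ge 1).
\]

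Next I would split the integral at $x=1$. On $[1,AL]$ the majorant gives
\[
H_{L,\tau}(x)^m\,\frac{2\sinh^2(x/2)}{x}\le \frac{2C^m}{L^m}\,\frac{x^{m-1}}{\sinh^{m-2}(x/2)},
\]
and the whole point is that $\int_1^\infty x^{m-1}\sinh^{-(m-2)}(x/2)\,dx$ \emph{converges} precisely when $m\ge 3$, so this part of the integral is $O(L^{-m})$. (For $m=2$ the exponent $m-2$ vanishes and one is left with $\int_1^{AL}x\,dx\asymp L^2$, which is exactly why the variance survives in \eqref{eq:RGOE}; the same computation recovers $\SigGOE(f)$.) On $(0,1]$ I would use $2\sinh^2(x/2)/x\ll x$ together with the logarithmic bound, so that the contribution is at most $\frac{C^m}{L^m}\int_0^1 x\,(\log\tfrac{2}{x})^m\,dx=O(L^{-m})$, the integral being finite. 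Adding the two ranges yields $\int_0^\infty H_{L,\tau}(x)^m\,d\nu_{MP}(x)=O(L^{-m})$ uniformly in $\tau$, which tends to $0$ for every $m\ge 3$.

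The main obstacle is obtaining a pointwise majorant for $H_{L,\tau}$ that simultaneously captures the logarithmic blow-up as $x\to 0$ and the genuine exponential decay for $x$ of order $1$ and larger; a crude bound that ignores the $\sinh$-decay fails badly, because $2\sinh^2(x/2)/x$ is of size $e^x/x$ near the cutoff $x\sim AL$. Once the correct two-regime bound is in place, the argument reduces to the elementary observation that the surviving denominator $\sinh^{m-2}(x/2)$ makes the large-$x$ integral converge exactly for $m\ge 3$, which is the sharp threshold separating the nonvanishing variance from the vanishing higher moments. I would also note that taking absolute values costs nothing, since $\big|\int H_{L,\tau}^m\,d\nu_{MP}\big|\le \int |H_{L,\tau}|^m\,d\nu_{MP}$, so the bound applies to odd $m$ as well.
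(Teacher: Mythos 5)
Your proof is correct and follows essentially the same route as the paper's: a two-regime pointwise bound on $H_{L,\tau}$ (logarithmic blow-up near $x=0$, size $\frac{1}{L}\,x e^{-x/2}$ for large $x$), then a split of the $\nu_{MP}$-integral in which the surviving $\sinh^{-(m-2)}(x/2)$ makes the large-$x$ part converge precisely for $m\geq 3$ and the integrable singularity $x(\log\frac{2}{x})^m$ handles the small-$x$ part, all uniformly in $\tau$ via $|\cos|\leq 1$. The only differences are minor: the paper imports the pointwise bounds from \cite[Lemma 6.3]{RGOE} rather than deriving them, and your small-$x$ majorant $\frac{C}{L}\log\frac{2}{x}$ is slightly sharper than the quoted $\frac{1}{L}\log\frac{L}{x}$, so you obtain the clean estimate $O_m(L^{-m})$ where the paper settles for $L^{-m+o(1)}$; either suffices for \eqref{criterion gaussian m}.
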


\begin{proof}
We use: 
\begin{lem}\cite[Lemma 6.3]{RGOE}\label{lem:bounds for HL}
Let $L>2$.  Then uniformly in $\tau$,

i) For $0<x<1/2$, we have $|H_{L,\tau}(x) | \ll \frac 1L \log (L/x)$.

ii) For $x\geq 1/2$, we have   $|H_{L,\tau}(x) | \ll \frac 1L  x \exp(-x/2)$.
\end{lem}

Applying Lemma~\ref{lem:bounds for HL} we obtain
\[
\begin{split}
\int_0^\infty H_{L,\tau}(x)^m d\nu_{MP}(x) &\ll \int_0^{1/2} \frac 1{L^m}(\log \frac Lx )^m \frac{\sinh(x/2)^2 }{x}dx 
\\
& \qquad +\int_{1/2}^L \frac 1{L^m} x^me^{-mx/2} \frac{\sinh(x/2)^2 }{x}dx .
\end{split}
\]
For $0<x<1/2$, use $\sinh(x/2)< 1.1 \cdot x/2\ll x$ to bound
\[
\begin{split}
\int_0^{1/2} (\log \frac Lx )^m \frac{\sinh(x/2)^2 }{x}dx &\ll \int_0^{1/2} x(\log \frac Lx )^m dx  
\\
&= L^2\int_{2L}^\infty (\log y)^m \frac{dy}{y^3} 
\\
&\ll_m  L^2\int_{2L}^{L^2} (\log L^2)^m \frac{dy}{y^3}  +L^2\int_{L^2}^\infty y^{1/2} \frac{dy}{y^3} 
\\ 
&\ll  (\log L)^m + L^{-3}\ll  \left(\log L \right)^m 
\end{split}
\]
so that the first integral contributes  $O( (\log L/L)^m )$. For the second integral, use $\sinh(x/2) <  e^{x/2}$ for $x>0$ to bound, for $m\geq 3$, 
\[
\int_{1/2}^L   x^me^{-mx/2} \frac{\sinh(x/2)^2 }{x}dx < \int_{0}^\infty e^{-(\frac m2-1)x} x^{m-1} dx =\frac{ 2^m \Gamma (m)}{ (m-2)^{m}}
\] 
so that the second integral contributes $O_m(L^{-m})$. Thus we obtain for $m\geq 3$ 
\[
\int_0^\infty H_{L,\tau}(x)^m d\nu_{MP}(x)  \ll_m L^{-m+o(1)} 
\]
which proves \eqref{criterion gaussian m}. 
\end{proof}

 This concludes the proof of Theorem~\ref{thm:char function N},  hence of Theorem~\ref{thm:char function Nintro}.

\end{document}